\documentclass[11pt]{article}
\usepackage{amsmath,amssymb,amsfonts,amsthm}
\usepackage{float}
\numberwithin{equation}{section}
\newtheorem{theorem}{Theorem}[section]

\newtheorem{lemma}[theorem]{Lemma}
\newtheorem{proposition}[theorem]{Proposition}

\pdfpagewidth 8.5in
\pdfpageheight 11in
\setlength\topmargin{0in}
\setlength\headheight{0in}
\setlength\headsep{0in}
\setlength\textheight{7.7in}
\setlength\textwidth{6.5in}
\setlength\oddsidemargin{0in}
\setlength\evensidemargin{0in}
\setlength\parindent{0.25in}
\setlength\parskip{0.25in} 
\begin{document}
\begin{center}
{\Large{\textbf{A note on the number of edges of the Jaco Graph, $J_n(1), n \in \Bbb N$}}} 
\end{center}
\vspace{0.5cm}
\large{\centerline{(Johan Kok, Vivian Mukungunugwa)\footnote {\textbf {Affiliation of authors:}\\
\noindent Johan Kok (Tshwane Metropolitan Police Department), City of Tshwane, Republic of South Africa\\
e-mail: kokkiek2@tshwane.gov.za\\ \\
\noindent Vivian Mukungunugwa (Department of Mathematics and Applied Mathematics, University of Zimbabwe), City of Harare, Republic of Zimbabwe\\
e-mail: vivianm@maths.uz.ac.zw}}
\vspace{0.5cm}
\begin{abstract}
\noindent Kok et.al. $[3]$ introduced Jaco Graphs (\emph{order 1}).  It is hoped that as a special case, a closed formula can be found for the number of edges of a finite Jaco Graph $J_n(1).$ However, the algorithms discussed in Ahlbach et al.[1] suggest this might not be possible. Finding a closed formula for the number of edges of a Jaco Graph, $J_n(1), n \in \Bbb N$ remains an interesting open problem. In this note we present three alternative, \emph{formula.}\\ \\
\end{abstract}
\noindent {\footnotesize \textbf{Keywords:} Jaco graph, Directed graph, Hope graph, Jaconian vertex, Number of edges.}\\ \\
\noindent {\footnotesize \textbf{AMS Classification Numbers:} 05C07, 05C20, 05C38, 05C75, 05C85} 
\section{Introduction} 
The infinite Jaco graph (\emph{order 1}) was introduced in $[3],$ and defined by $V(J_\infty(1)) = \{v_i| i \in \Bbb N\}$, $E(J_\infty(1)) \subseteq \{(v_i, v_j)| i, j \in \Bbb N, i< j\}$ and $(v_i,v_ j) \in E(J_\infty(1))$ if and only if $2i - d^-(v_i) \geq j.$ The graph has four fundamental properties which are; $V(J_\infty(1)) = \{v_i|i \in \Bbb N\}$ and, if $v_j$ is the head of an edge (arc) then the tail is always a vertex $v_i, i<j$ and, if $v_k,$ for smallest $k \in \Bbb N$ is a tail vertex then all vertices $v_ \ell, k< \ell<j$ are tails of arcs to $v_j$ and finally, the degree of vertex $k$ is $d(v_k) = k.$ The family of finite directed graphs are those limited to $n \in \Bbb N$ vertices by lobbing off all vertices (and edges arcing to vertices) $v_t, t > n.$ Hence, trivially we have $d(v_i) \leq i$ for $i \in \Bbb N.$
\section{Number of edges of a Jaco Graph, $J_n(1), n \in \Bbb N$}
It is hoped that a closed formula can be found for the number of edges of a finite Jaco Graph $J_n(1).$ However, the algorithms discussed in Ahlbach et al.[1] suggest this might not be possible. Finding a closed formula for the number of edges of a Jaco Graph, $J_n(1), n \in \Bbb N$ remains an interesting open problem. We present three alternative, \emph{formula.}
\begin{lemma}
$\epsilon (J_n(1)) = \epsilon (J_{n-1}(1)) + d^-(v_n).$
\end{lemma}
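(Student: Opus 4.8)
The plan is to argue that the graph $J_{n-1}(1)$ sits inside $J_n(1)$ as an induced subgraph whose edge set survives intact when the single vertex $v_n$ is appended, so that the only new arcs are precisely those arriving at $v_n$. First I would recall from the construction that a finite Jaco graph is obtained from $J_\infty(1)$ by deleting every vertex $v_t$ with $t > n$ together with the arcs incident to them; thus $V(J_n(1)) = \{v_1, \dots, v_n\}$ and an arc belongs to $E(J_n(1))$ exactly when $i < j \le n$ and $2i - d^-(v_i) \ge j$.

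The key step is to verify that the membership criterion for an arc $(v_i, v_j)$ with $j \le n-1$ is unchanged on passing from $J_{n-1}(1)$ to $J_n(1)$. By the fundamental property that every arc has its tail at a strictly smaller index than its head, the in-degree $d^-(v_i)$ counts only arcs $(v_k, v_i)$ with $k < i$. Since appending $v_n$ introduces no vertex of index smaller than any $i \le n-1$, none of these counts is altered; hence $d^-(v_i)$ is invariant under the truncation level for every $i \le n-1$, and the condition $2i - d^-(v_i) \ge j$ holds in $J_n(1)$ precisely when it holds in $J_{n-1}(1)$. Consequently the set of arcs of $J_n(1)$ whose head is not $v_n$ coincides exactly with $E(J_{n-1}(1))$.

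It then remains to count the arcs whose head is $v_n$. These are precisely the in-arcs $(v_i, v_n)$, and by the definition of in-degree their number is $d^-(v_n)$. Partitioning $E(J_n(1))$ into arcs with head $v_n$ and arcs with head of smaller index yields
\[
\epsilon(J_n(1)) = \epsilon(J_{n-1}(1)) + d^-(v_n),
\]
as claimed.

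I expect the main obstacle, and really the only point requiring genuine care, to be the invariance of the in-degrees $d^-(v_i)$ under truncation; everything else is the routine bookkeeping of a partition of the edge set. Once one is satisfied that adjoining a higher-indexed vertex can neither create nor destroy an arc among the lower-indexed vertices, because tails always precede heads, the recursion follows immediately.
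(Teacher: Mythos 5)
Your proof is correct and fills in exactly the argument the paper omits: the paper's own proof of Lemma 2.1 consists of the single word ``Trivial,'' and your partition of $E(J_n(1))$ into arcs with head $v_n$ and arcs inherited unchanged from $J_{n-1}(1)$ is the standard justification. The one point you rightly flag --- that in-degrees of vertices $v_i$ with $i \leq n-1$ are unaffected by appending $v_n$, since tails always have smaller index than heads --- is precisely what makes the recursion immediate.
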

\begin{proof}
Trivial.
\end{proof}
\noindent We now present the adapted Fisher Algorithm. The original Fisher Algorithm is found in [3].
\subsection{ The adapted Fisher Algorithm for $\{J_i(1), i\in\{4,5,6,\cdots, s\in \Bbb N\}$}
\noindent The family of finite Jaco Graphs are those limited to $n \in \Bbb N$ vertices by lobbing off all vertices (and edges arcing to vertices) $v_t, t > n.$ Hence, trivially we have $d(v_i) \leq i$ for $i \in \Bbb N.$ \\ \\
Note that rows 1, 2 and 3 follow easily from the definition. \\ \\
Step 0: Set $j = 4$, then set $i = j$ and $s\geq 4.$ \\
Step 1: Set $ent_{1i} = i.$ \\
Step 2: Set $ent_{2i}=ent_{1(i-1)} - ent_{4(i-1)}.$ (Note that $d^-(v_i)=v(\Bbb{H}_{i-1}(1))=(i-1)- \Delta(J_{i-1}(1))).$\\
Step 3: Set $ent_{3i}=ent_{1i} - ent_{2i}.$ (Note that $d^+(v_i)=i-d^-(v_i)).$\\
Step 4: Set $ent_{5i} = ent_{5(i-1)} + ent_{2i}.$ (Lemma 2.1)\\  
Step 5: Set $j = i+1$, then set $i = j.$  If $i \leq s$, go to Step 1, else go to Step 6.\\
Step 6: Exit. \\ \\
\noindent \textbf{First recursive formula:} Note that Lemma 2.1 provides the \emph{first} recursive formula to determine the number of edges of $J_n(1).$ Using the adapted Fisher Algorithm together with Lemma 2.1 the table below follows easily. \\
\begin{tabular}{|c|c|c|c|c|}
\hline
$\phi (v_i)\rightarrow i\in{\Bbb{N}}$&$d^-(v_i)=\nu(\Bbb{H}_{i-1})$&$d^+(v_i)= i - d^-(v_i)$&$\Delta(J_i(1))$&$\epsilon(J_i(1)) = \epsilon(J_{i-1}(1)) + d^-(v_i)$\\
\hline
1=$f_2$&0&1&0&0\\
\hline
2=$f_3$&1&1&1&1\\
\hline
3=$f_4$&1&2&2&2\\
\hline
4&1&3&2&3\\
\hline
5=$f_5$&2&3&3&5\\
\hline
6&2&4&3&7\\
\hline
7&3&4&4&10\\
\hline
8=$f_6$&3&5&5&13\\
\hline
9&3&6&5&16\\
\hline
10&4&6&6&20\\
\hline
11&4&7&7&24\\
\hline
12&4&8&7&28\\
\hline
13=$f_7$&5&8&8&33\\
\hline
14&5&9&8&38\\
\hline
15&6&9&9&44\\
\hline
16&6&10&10&50\\
\hline
17&6&11&10&56\\
\hline
18&7&11&11&63\\
\hline
19&7&12&11&70\\
\hline
20&8&12&12&78\\
\hline
21=$f_8$&8&13&13&86\\
\hline
22&8&14&13&94\\
\hline
23&9&14&14&103\\
\hline
24&9&15&15&112\\
\hline
25&9&16&15&121\\
\hline
26&10&16&16&131\\
\hline
27&10&17&16&141\\
\hline
28&11&17&17&152\\
\hline
29&11&18&18&163\\
\hline
30&11&19&18&174\\
\hline
31&12&19&19&186\\
\hline
32&12&20&20&198\\
\hline
33&12&21&20&210\\
\hline
34=$f_9$&13&21&21&223\\
\hline
35&13&22&21&236\\
\hline 
\end{tabular}\\ \\ \\ \\
\noindent \textbf{Second formula:} It is well known that $\epsilon (J_n(1)) = \sum\limits^n_{i=1}d^-(v_i).$ Since $d^-(v_n) = n - d^+(v_n)$, the number of edges is also given by $\epsilon(J_n(1)) = \frac{1}{2}n(n-1) - \sum\limits^n_{i=1}d^+(v_i).$ Furthermore, for $n\geq 2$ we have $d^+(v_1) = 1$, so we rather consider $\epsilon (J_n(1)) =(\frac{1}{2}(n(n-1) - 1) - \sum\limits^n_{i=2}d^+(v_i).$\\ \\
Bettina's Theorem [4] provides for a method to determine $d^+(v_i), \forall i \in \Bbb N$.\\ \\
\noindent \textbf{Example 1.} For the Jaco Graph $J_{15}(1)$ we have;\\ $\epsilon (J_{15}(1)) = \frac{1}{2}.15.(15+1) -1 - \sum\limits^{15}_{i=2} = 119 - \sum\limits^{15}_{i=2}d^+(v_i).$\\ \\
\noindent Now, $1=f_2, 2 = f_3, 3 = f_4, 4=f_4 + f_2, 5 = f_5, 6= f_5 + f_2, 7 = f_5 + f_3, 8= f_6, 9= f_6 + f_2, 10 = f_6 + f_3, 11 = f_6 + f_4, 12 = f_6 +f_4 +f_2,13= f_7, 14 = f_7 +f_2,$ and $15 = f_7 +f_3. $\\ \\
\noindent From Bettina's Theorem it follows that;\\ $\sum\limits^{15}_{i=2}d^+(v_i) = f_2 + f_3 + (f_3+f_1) + f_4 + (f_4+f_1) + (f_4 + f_2) + f_5 + (f_5 + f_1) + (f_5 + f_2) +(f_5 + f_3) + (f_5+ f_3+ f_1) +f_6 + (f_6 + f_1) + (f_6 + f_2) = 5f_1 + 4f_2 + 4f_3 + 3f_4 +5f_5 +3f_6 = f_1 +5f_3 +5f_5 +3f_7 = 75.$\\ \\
So, $\epsilon (J_{15}(1)) = 119 - 75 = 44.$\\ \\
\noindent \textbf{Third formula:} The third formula follows from Proposition 2.2 and 2.3.
\begin{proposition}
If  for the finite Jaco Graph $J_n(1), n \in \Bbb N,$ $n$ can be expressed as $n= m_{=d^+(v_n)} + d^+(v_m)$ we have that:\\
$\epsilon(J_n(1)) = \frac{1}{2}(\sum\limits^m_{i=1}i + \sum\limits^{j_{max}}_{i=0} (d^+(v_{m - i})- i)_{d^+ (v_{m - j_{max}}) - j_{max} \geq 1} + d^+(v_m)(d^+(v_m) - 1)).$
\end{proposition}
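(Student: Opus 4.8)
The plan is to reduce everything to the identity $\epsilon(J_n(1)) = \sum_{i=1}^{n} d^-(v_i)$ together with the observation that, since $2i - d^-(v_i) = i + d^+(v_i)$, the arc $(v_i,v_j)$ is present exactly when $i + d^+(v_i) \ge j$. Thus the out-neighbours of $v_i$ are the consecutive vertices $v_{i+1}, \dots, v_{i+d^+(v_i)}$, and I would work throughout with the \emph{reach} $R(i) := i + d^+(v_i)$, the index of the last vertex that $v_i$ points to. The whole argument hinges on $R$ being non-decreasing, which I would record first as a short lemma; it is equivalent to $d^+(v_i) \le d^+(v_{i+1})$, visible in the table and derivable from the recursion of the adapted Fisher Algorithm. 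I read the hypothesis ``$n = m_{=d^+(v_n)} + d^+(v_m)$'' as the pair of conditions $m = d^+(v_n)$ and $R(m) = m + d^+(v_m) = n$, i.e. $v_m$ is the Jaconian vertex, the smallest tail of an arc into $v_n$, and its last out-neighbour is exactly $v_n$.

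First I would exploit monotonicity of $R$ to obtain a clean dichotomy about which arcs survive the lobbing-off at $n$: for $a \le m$ we have $R(a) \le R(m) = n$, so every arc out of $v_a$ stays inside $J_n(1)$ and $v_a$ contributes its full out-degree $d^+(v_a)$; for $a \ge m+1$ we have $R(a) \ge n$, so the surviving arcs out of $v_a$ are precisely those to $v_{a+1}, \dots, v_n$, contributing $n-a$. Counting edges by their tails then gives
\[
\epsilon(J_n(1)) = \sum_{a=1}^{m} d^+(v_a) + \sum_{a=m+1}^{n}(n - a).
\]
The second sum telescopes, via $n - m = d^+(v_m)$, to $\tfrac12 d^+(v_m)(d^+(v_m)-1)$, which is precisely the third term of the claimed formula.

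It then remains to rewrite $\sum_{a=1}^m d^+(v_a)$ as $\tfrac12\bigl(\sum_{i=1}^m i + \sum_{i=0}^{j_{max}}(d^+(v_{m-i})-i)\bigr)$, and I would do this with two bookkeeping identities on the truncated graph $J_m(1)$. Writing $d^+(v_a)=a-d^-(v_a)$ and summing gives $\sum_{a=1}^m d^+(v_a) = \tfrac{m(m+1)}{2} - \epsilon(J_m(1))$. Separately, the ``overshoot'' of $J_m(1)$, namely $\sum_{a=1}^m \max(0, R(a)-m)$, counts exactly the intrinsic arcs whose tail lies in $\{v_1,\dots,v_m\}$ but whose head exceeds $v_m$, so it equals the gap $\sum_{a=1}^m d^+(v_a) - \epsilon(J_m(1))$ between the intrinsic out-degree sum and the true edge count. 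Setting $a = m-i$, the overshoot at $v_{m-i}$ is $R(m-i)-m = d^+(v_{m-i}) - i$; monotonicity of $R$ makes these terms non-increasing in $i$, so the positive ones occupy the unbroken range $i = 0,1,\dots,j_{max}$, where $j_{max}$ is the largest index with $d^+(v_{m-j_{max}}) - j_{max} \ge 1$. Eliminating $\epsilon(J_m(1))$ between the two identities yields $2\sum_{a=1}^m d^+(v_a) = \sum_{i=1}^m i + \sum_{i=0}^{j_{max}}(d^+(v_{m-i})-i)$, and substituting this back into the displayed decomposition produces the stated closed form.

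The main obstacle I anticipate is not the algebra but pinning down the two monotonicity-driven claims rigorously: that $v_m$ is the exact cutoff between vertices whose arcs fully survive and those whose arcs are truncated, and that the overshoot terms form the consecutive range $i = 0,\dots,j_{max}$ rather than a scattered set. Both rest on $R(i) = i + d^+(v_i)$ being non-decreasing, so I would concentrate the care there — citing the relevant structural property of $J_\infty(1)$ from [3] or deriving $d^+(v_i) \le d^+(v_{i+1})$ directly from the Fisher recursion — after which the edge decomposition and the final substitution are routine.
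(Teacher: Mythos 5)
Your proof is correct, but it takes a genuinely different route from the paper's. The paper argues by reconstruction plus the handshake lemma: starting from $J_m(1)$ it restores (i) the $d^+(v_m)$ arcs out of the Jaconian vertex $v_m$, (ii) the complete Hope graph on $v_{m+1},\dots,v_n$, and (iii) the bridging arcs from $v_{m-i}$, $1\le i\le j_{max}$, and then reads the three terms of the formula directly off $\epsilon(G)=\frac{1}{2}\sum_{v\in V(G)}d_G(v)$: the term $\sum_{i=1}^m i$ is the full degree sum of $v_1,\dots,v_m$, the middle sum is the in-degree received by $v_{m+1},\dots,v_n$ from tails in $\{v_1,\dots,v_m\}$, and $d^+(v_m)(d^+(v_m)-1)$ is the internal degree contribution of the complete Hope graph. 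You instead count arcs by their tails, so the Hope-graph block appears as the telescoping sum $\sum_{a=m+1}^{n}(n-a)=\frac{1}{2}d^+(v_m)(d^+(v_m)-1)$, and the remaining work is converting $\sum_{a\le m}d^+(v_a)$ into the half-sum form by eliminating $\epsilon(J_m(1))$ between your two identities; note that your overshoot arcs and the paper's Step 1 plus Step 3 arcs are the same set (tail at most $m$, head greater than $m$), merely weighted differently in the two bookkeepings. What your version buys is that the structural input is isolated in a single preliminary lemma, the monotonicity of the reach $R(a)=a+d^+(v_a)$ (equivalently $d^+(v_i)\le d^+(v_{i+1})$), which you rightly flag as the point requiring proof or a citation to [3]; the paper uses the same fact only implicitly, when it asserts that the subgraph induced on $v_{m+1},\dots,v_n$ is complete and that the positive bridging counts occupy the consecutive range $i=0,\dots,j_{max}$. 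The paper's version buys a more direct, arc-by-arc identification of each term of the stated formula. Your derivation is sound as sketched, provided you keep $d^+(v_a)$ and $d^-(v_a)$ consistently as degrees in $J_\infty(1)$, which is legitimate for $a\le m$ precisely because in-arcs always come from lower-indexed vertices and $R(a)\le R(m)=n$ guarantees all out-arcs of such $v_a$ survive in $J_n(1)$.
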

\begin{proof}
Consider the the Jaco Graph $J_n(1)$ with $ n = m +d^+(v_m).$ Now consider the Jaco Graph $J_m(1).$ From the definition, $J_m(1)$ was obtained by lobbing off the vertices $v_{m+1}, v_{m+2}, ..., v_{m+d^+(v_m)}$ together with all arcs (edges) incident to the said vertices. Reconstructing $J_n(1)$ can follow in three steps.\\ \\
Step 1: Link the arcs $(v_m, v_{m+1}), (v_m, v_{m+2}), ..., (v_m, v_{m + d^+(v_m)})$ to construct the graph $J^*(1).$ After these additional $d^+(v_m)$ arcs are added we have $d(v_m) = m$ in $J^*(1)$  as required by definition. Note that $d^+(v_m)$ will be denoted $(d^+(v_m) - 0).$\\ \\
Step 2: Because vertex $v_m$ is the Jaconian vertex of $J_n(1)$, we link all arcs (edges) to vertices $v_{m+1}, v_{m+2}, ..., v_{m+ d^+(v_m)}$ to ensure the Hope graph of $J_n(1)$ is constructed (See [3].) Hence $d^+(v_m)(d^+(v_m) -1)$ arcs are added.\\ \\
Step 3: If $d(v_{m-1}) < (m-1)$ then exactly $(d^+(v_{m-1}) - 1)$ \emph{bridging arcs} are needed. So link $(v_{m-1}, v_{m+1}), (v_{m-1}, v_{m+2}), (v_{m-1}, v_{m+3}), ..., (v_{m-1}, v_{m+ d^+(v_{(m-1)}-1)}) .$ Recursively we add \emph{bridging arcs} for vertices $v_{m-2}, v_{m-3}, ..., v_{m- j_{max}}$ such that $(d^+(v_{m - j_{max}}) -j_{max}) \geq 1.$\\ \\
Now the original Jaco graph $J_n(1)$ has been reconstructed. From the well-known general result, $\epsilon (G) = \frac{1}{2}\sum\limits_{v\in V(G)}d_G(v),$ the result:\\ \\
$\epsilon(J_n(1)) = \frac{1}{2}(\sum\limits^m_{i=1}i + \sum\limits^{j_{max}}_{i=0} (d^+(v_{m - i})- i)_{d^+ (v_{m - j_{max}}) - j_{max} \geq 1} + d^+(v_m)(d^+(v_m) - 1)),$ follows. 
\end{proof}
\noindent\textbf{Example 2.} Determine $\epsilon(J_{31}(1)).$ Now, $31 = 19 + d^+(v_{19}) = 19 + 12.$ So we have:\\ \\
$\epsilon (J_{31}(1)) = \frac{1}{2}(\sum\limits^{19}_{i=1}i + \sum\limits^{j_{max}}_{i=0} (d^+(v_{19-i}) - i) + d^+(v_{19})(d^+(v_{19}) - 1)) = \frac{1}{2}(\frac{1}{2}(20.19) + [(12-0) +(11-1)+(11-2)+(10-3)+(9-4)+(9-5)+(8-6)+ (8-7)]_{=50} + [12.11]_{=132}) = \frac{1}{2}(190 + 50 + 132) = 186. $\\ \\
\noindent Note that not all $n \in \Bbb N$ can uniquely be written as $n = m + d^+(v_m)$ for $m \in \Bbb N$. From Corollary 3.5 in  $[4]$ the next proposition follows.
\begin{proposition}
If, for the Jaco Graph $J_{n-1}(1)$ the integer $(n-1)$ cannot be expressed as $n-1 = m_{= d^+(v_{n-1})} + d^+(v_m),$ then $n= m_{= d^+ (v_{n-1})} + d^+(v_m) = m_{=d^+(v_n)} + d^+(v_m),$ and $\epsilon(J_{n-1}(1)) = \epsilon(J_n(1)) - d^+(v_m).$
\end{proposition}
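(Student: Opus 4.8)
The plan is to recast the entire statement in terms of the single function $f(i) := i + d^{+}(v_i)$, noting that the defining condition $2i - d^{-}(v_i) \ge j$ is exactly $f(i) \ge j$, since $2i - d^{-}(v_i) = i + d^{+}(v_i)$. Under this reformulation the in-neighbours of $v_n$ are precisely the vertices $v_i$ with $i < n$ and $f(i) \ge n$; because $d^{+}(v_i) = i - d^{-}(v_i) \ge 1$ always, $v_{n-1}$ is among them, so these in-neighbours form a contiguous block $v_m, v_{m+1}, \dots, v_{n-1}$ whose least index $m$ is the Jaconian index. Counting the block gives $d^{-}(v_n) = n - m$, hence $d^{+}(v_n) := n - d^{-}(v_n) = m$, which is exactly the identification $m = d^{+}(v_n)$ used in the statement. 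Identically, $d^{+}(v_{n-1})$ is the least index $i$ with $f(i) \ge n-1$, and the hypothesis that $n-1$ cannot be written as $m' + d^{+}(v_{m'})$ translates, for the Jaconian index $m'$ of $J_{n-1}(1)$, to $f(m') \ne n-1$; together with $f(m') \ge n-1$ this forces $f(m') \ge n$.

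First I would establish the key structural lemma: $f$ is strictly increasing with $f(i+1) - f(i) \in \{1,2\}$ for all $i$. This reduces, via $f(i+1) - f(i) = 2 - \bigl(d^{-}(v_{i+1}) - d^{-}(v_i)\bigr)$, to showing that the in-degree sequence is non-decreasing in unit steps, i.e. $d^{-}(v_{i+1}) - d^{-}(v_i) \in \{0,1\}$. This is the one genuinely non-trivial ingredient; it is the content of Corollary 3.5 of [4] (and can alternatively be read off from the identity $d^{-}(v_i) = (i-1) - \Delta(J_{i-1}(1))$ together with the fact that $\Delta(J_{i-1}(1))$ grows by $0$ or $1$ at each step). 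I expect this step to be the main obstacle, since everything after it is pure bookkeeping.

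With the lemma in hand the remainder is immediate. Write $m := d^{+}(v_n)$, the least index with $f(m) \ge n$, and $m' := d^{+}(v_{n-1})$, the least index with $f(m') \ge n-1$. Since $\{i : f(i) \ge n\} \subseteq \{i : f(i) \ge n-1\}$ we get $m' \le m$, while the hypothesis $f(m') \ge n$ places $m'$ in $\{i : f(i) \ge n\}$ and so gives $m \le m'$; hence $m' = m$ and $d^{+}(v_{n-1}) = d^{+}(v_n) = m$. Minimality of $m$ in $\{i : f(i) \ge n-1\}$ gives $f(m-1) < n-1$, i.e. $f(m-1) \le n-2$, so the jump bound yields $f(m) \le f(m-1) + 2 \le n$; combined with $f(m) \ge n$ this forces $f(m) = n$, which is precisely $n = m + d^{+}(v_m)$, the first assertion.

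Finally I would read off the edge count. From $f(m) = n$ we get $d^{-}(v_n) = n - m = f(m) - m = d^{+}(v_m)$, and Lemma 2.1 gives $\epsilon(J_n(1)) = \epsilon(J_{n-1}(1)) + d^{-}(v_n) = \epsilon(J_{n-1}(1)) + d^{+}(v_m)$, which rearranges to $\epsilon(J_{n-1}(1)) = \epsilon(J_n(1)) - d^{+}(v_m)$, as required. The only boundary issue to treat separately is the smallest admissible $n$, where $m-1$ may fail to exist; this is settled directly from the table.
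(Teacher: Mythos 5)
Your argument is correct, and it is considerably more complete than what the paper actually writes down. The paper's proof consists of a single sentence -- the edge count follows from ``the reverse of Lemma 2.1 because $d^-(v_n) = d^+(v_m)$'' -- with the entire first assertion (that $n$ is representable as $m + d^+(v_m)$ with $m = d^+(v_{n-1}) = d^+(v_n)$) delegated wholesale to Corollary 3.5 of [4], which is cited but never unpacked. Your final step coincides with the paper's: once $f(m) = n$ is known, $d^-(v_n) = n - m = d^+(v_m)$ and Lemma 2.1 does the rest. What you add is an actual derivation of the representability claim: the reformulation of the adjacency condition as $f(i) = i + d^+(v_i) \geq j$, the identification of $d^+(v_n)$ as the least index with $f(i) \geq n$, and the squeeze $n \leq f(m) \leq f(m-1) + 2 \leq n$ coming from the unit-step growth of the in-degree sequence. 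You correctly isolate that growth property as the one genuinely non-trivial ingredient and correctly locate it in [4]; given that, your bookkeeping is sound (including the observation that the non-representability hypothesis for $n-1$ upgrades $f(m') \geq n-1$ to $f(m') \geq n$, which is exactly where the hypothesis is used, and the flagged boundary case where $m - 1$ does not exist). In short: same skeleton as the paper, but with the load-bearing citation replaced by a self-contained argument, which is a real improvement in rigour.
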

\begin{proof}
The result follows from the reverse of Lemma 2.1 because $d^-(v_n) = d^+(v_m).$
\end{proof}
\noindent \textbf{Example 3.} Determine $\epsilon(J_{17}(1)).$ Note that $d^+(v_{17}) + d^+(v_{d^+(v_{17})}) = 18 \neq 17.$ However, $d^+(v_{18}) + d^+(v_{d^+(v_{18})}) = 18,$ so $\epsilon(J_{17}(1)) = \epsilon(J_{18}(1)) - d^+(v_{11}).$ Hence $\epsilon(J_{17}(1)) = 63 -7 = 56.$\\ \\
\noindent \textbf{\emph{Open access:\footnote {To be submitted to the \emph{Pioneer Journal of Mathematics and Mathematical Sciences.}}}} This paper is distributed under the terms of the Creative Commons Attribution License which permits any use, distribution and reproduction in any medium, provided the original author(s) and the source are credited. \\ \\
References (Limited){\footnote {To the honour of my wife, Lizzy Kok. I wish her peace and happiness on her birthday on Sunday, 7 September 2014. My lady thank you for the support you give me, and thank you much for the patience you bear with me whilst I engage in my mathematical endeavours.}} \\ \\
$[1]$ Ahlbach, C., Usatine, J., Pippenger, N., \emph{Efficient Algorithms for Zerckendorf Arithmetic,} Fibonacci Quarterly, Vol 51, No. 13, (2013): pp 249-255. \\  
$[2]$ Bondy, J.A., Murty, U.S.R., \emph {Graph Theory with Applications,} Macmillan Press, London, (1976). \\
$[3]$ Kok, J., Fisher, P., Wilkens, B., Mabula, M., Mukungunugwa, V., \emph{Characteristics of Finite Jaco Graphs, $J_n(1), n \in \Bbb N$}, arXiv: 1404.0484v1 [math.CO], 2 April 2014.\\
$[4]$  Kok, J., Fisher, P., Wilkens, B., Mabula, M., Mukungunugwa, V., \emph{Characteristics of Jaco Graphs, $J_\infty(a), a \in \Bbb N$}, arXiv: 1404.1714v1 [math.CO], 7 April 2014. \\
\end{document}